\definecolor{darkblue}{rgb}{0,0,0.6}
\date{April 13, 2015}
\title{On the $K$-theory of subgroups of virtually connected Lie groups}
\author{Daniel Kasprowski}
\address{Max-Planck-Institut für Mathematik, Vivatsgasse 7, 53111 Bonn, Germany}
\email{kasprowski@mpim-bonn.mpg.de}
\newcommand{\bbC}{\mathbbm{C}}
\newcommand{\bbQ}{\mathbbm{Q}}
\newcommand{\bbF}{\mathbb{F}}
\newcommand{\bbR}{\mathbbm{R}}
\newcommand{\bbZ}{\mathbbm{Z}}
\newcommand{\bbN}{\mathbbm{N}}
\newcommand{\bbL}{\mathbb{L}}
\newcommand{\bbK}{\mathbb{K}}
\newcommand{\mcS}{\mathcal{S}}
\DeclareMathOperator{\Aut}{Aut}
\DeclareMathOperator{\ind}{ind}
\DeclareMathOperator{\Map}{Map}
\DeclareMathOperator*{\colim}{colim}
\newcommand{\VCyc}{\mathcal{V}cyc}
\newcommand{\mcF}{\mathcal{F}}
\newcommand{\mcA}{\mathcal{A}}
\newcommand{\mcB}{\mathcal{B}}
\newcommand{\mfh}{\mathfrak{h}}
\newcommand{\Fin}{{\mathcal{F}in}}
\newcommand{\comment}[1]{}
\newcommand{\co}{\colon\thinspace}
\newtheorem{thm}{Theorem}[section]
\newtheorem{prop}{Proposition}[section]
\newtheorem{cor}{Corollary}[section]
\newtheorem{lemma}{Lemma}[section]
\theoremstyle{definition}
\newtheorem{rem}{Remark}[section]
\let\c@lemma=\c@thm
\let\c@prop=\c@thm
\let\c@cor=\c@thm
\let\c@example=\c@thm
\let\c@defi=\c@thm
\let\c@rem=\c@thm
\let\c@nota=\c@thm
 \newtheoremstyle{TheoremNum}
        {}{}              %%% space between body and thm
        {\itshape}                      %%% Thm body font
        {}                              %%% Indent amount (empty = no indent)
        {\bfseries}                     %%% Thm head font
        {.}                             %%% Punctuation after thm head
        { }                             %%% Space after thm head
        {\normalfont\bfseries\thmname{#1}\thmnote{#3}}%%% Thm head spec
\theoremstyle{TheoremNum}
\newtheorem{clear}[]{}
\begin{document}

\begin{abstract}
We prove that for every finitely generated subgroup $G$ of a virtually connected Lie group which admits a finite-dimensional model for $\underbar EG$ the assembly map in algebraic $K$-theory is split injective. We also prove a similar statement for algebraic $L$-theory, which in particular implies the generalized integral Novikov conjecture for such groups.
\end{abstract}

\maketitle

\section{Introduction}
For every group $G$ and every ring $R$ there is a functor $\bbK_R\co OrG\to \mathfrak{Spectra}$ from the orbit category of $G$ to the category of spectra sending $G/H$ to (a spectrum weakly equivalent to) the $K$-theory spectrum $\bbK(R[H])$ for every subgroup $H\leq G$. By $K$-theory we will always mean non-connective $K$-theory as constructed by Pedersen and Weibel \cite{MR802790}. For any such functor $F\co OrG\to \mathfrak{Spectra}$ a $G$-homology theory $\bbF$ can be constructed via
\[\bbF(X):=\Map_G(\_,X_+)\wedge_{OrG}F,\]
see Davis and L\"uck \cite{davislueck}. We will denote its homotopy groups by $H_n^G(\_,F):=\pi_n\bbF(X)$. Let $\mcF$ be a family of subgroups of $G$. The $K$-theoretic assembly map for $\mcF$ is the map
\begin{equation*}
\alpha_\mcF\co H_n^G(E_\mcF G;\bbK_R)\to H^G_n(pt;\bbK_R)\cong K_n(R[G])\label{eq1}
\end{equation*}
induced by the map $E_\mcF G\to pt$. Here $E_\mcF G$ denote the classifying space for the family $\mcF$, see L\"uck \cite{MR1757730}. The assembly map is a helpful tool to related the $K$-theory of the group ring $R[G]$ to the $K$-theory of the group rings over $H\in\mcF$. The assembly map can more generally be defined for any small additive $G$-category instead of $R$, see Bartels and Reich \cite{coefficients}. In this article all additive categories will be small.\\
Analogously, for every additive $G$-category $\mcA$ with involution and every family of subgroups $\mcF$ we can define the $L$-theoretic assembly map
\[\alpha_\mcF\co H_n^G(E_\mcF G;\bbL^{\langle-\infty\rangle}_\mcA)\to H^G_n(pt;\bbL^{\langle-\infty\rangle}_\mcA).\]
The Farrell--Jones conjecture states that the assembly maps $\alpha_{\VCyc}$ for the family of virtually cyclic subgroups in $K$- and $L$-theory are isomorphisms for all additive $G$-categories $\mcA$ (with involution) and all $n\in\bbZ$. It was first formulated in \cite{MR1179537}. The Farrell--Jones conjecture has been proved for a large class of groups, for example hyperbolic and CAT(0)-groups, see Bartels and L\"uck \cite{MR2993750,MR2967054}, Bartels, L\"uck and Reich \mbox{\cite{MR2421141,MR2385666}} and Wegner \cite{MR2869063}, virtually solvable groups, see Wegner \cite{solvable}, and lattices in virtually connected Lie groups, see Bartels, Farrell and L\"uck \cite{MR3164984} and Kammeyer, L\"uck and R\"uping \cite{FJClattices}. The Farrell--Jones conjecture implies that the assembly maps $\alpha_{\Fin}$ for the family of finite subgroups are split injective, see Bartels \cite[Theorem~1.3]{MR2012963}. The rational split injectivity of the map $\alpha_{\Fin}$ in $L$-theory implies the Novikov conjecture. The integral split injectivity of $\alpha_{\Fin}$ is called the \emph{generalized integral Novikov conjecture}, for more details see \autoref{sec:l-theory}.
Kasparov proved the Novikov conjecture for all discrete subgroups of virtually connected Lie groups in \cite[Theorem 6.9]{MR918241}. More generally, the Novikov conjecture is true for groups which uniformly embed into a Hilbert space, see Skandalis, Tu and Yu \cite{MR1905840}. This includes all amenable groups and all groups with finite asymptotic dimension. By Carlsson and Goldfarb \cite[Section~3]{MR2058455} and Ji \cite[Corollary 3.4]{MR2144540} discrete subgroups of virtually connected Lie groups have finite asymptotic dimension giving a second prove that the Novikov conjecture holds for these groups. %Thus such groups have Yu's property A, which implies that they satisfy the Novikov conjecture by \cite{..}. 
Here we will in particular show that discrete subgroups of virtually connected Lie groups also satisfy the generalized integral Novikov conjecture.

In \cite{KasFDC} the author proved the split injectivity of the assembly map for finitely generated subgroups $G$ of $GL_n(\bbC)$ which have an upper bound on the Hirsch length of the unipotent subgroups. For a definition of the Hirsch length see \autoref{sec:nil}. The bound on the Hirsch length exists if and only if $G$ has finite virtual cohomological dimension by Alperin and Shalen \cite{alperin}. Since $G$ is virtually torsion-free, this is the case if and only if there is a finite-dimensional model for $\underbar EG$ where we consider $G$ with the discrete topology, see L\"uck \cite[Theorem 3.1]{MR1757730}. In this article we want to extend this theorem to subgroups of all virtually connected Lie groups. Note that in the theorem we again consider $G$ with the discrete topology.
\begin{thm}
\label{thm:main}
Let $G$ be a finitely generated subgroup of a virtually connected Lie group and assume there exists a finite-dimensional model for $\underbar EG$. Then the $K$-theoretic assembly map
\[H_n^G(\underbar EG;\bbK_\mcA)\to K_n(\mcA[G])\]
is split injective for every additive $G$-category $\mcA$.
\end{thm}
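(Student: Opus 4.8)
The plan is to reduce the statement to the general machinery of \cite{KasFDC} and to feed into it a structural fact about virtually connected Lie groups. Recall that the main result of \cite{KasFDC} shows that a finitely generated group which admits a finite-dimensional model for $\underbar EG$ and has finite decomposition complexity has a split injective $K$-theoretic assembly map $H_n^G(\underbar EG;\bbK_\mcA)\to K_n(\mcA[G])$ for every additive $G$-category $\mcA$. Since the finite-dimensionality of $\underbar EG$ is part of our hypotheses, the entire proof reduces to showing that every finitely generated subgroup $G$ of a virtually connected Lie group has finite decomposition complexity. I would use throughout that finite decomposition complexity is a coarse invariant which, by the permanence results of Guentner, Tessera and Yu, passes to subgroups, is stable under extensions and under commensurability, and is shared by all finitely generated linear groups as well as by all countable abelian groups (the latter being increasing unions of finitely generated, hence finite asymptotic dimensional, abelian groups).

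First I would pass to the identity component. Write $L$ for the ambient virtually connected Lie group and let $L_0\leq L$ be its identity component, a connected closed normal subgroup of finite index; then $G_0:=G\cap L_0$ has finite index in $G$, so by commensurability it suffices to treat $G_0$. The heart of the argument is the adjoint representation: since $L_0$ is connected, $\mathrm{Ad}\co L_0\to GL(\mathfrak{l})$ on the real Lie algebra $\mathfrak{l}$ of $L_0$ has kernel exactly the center $Z(L_0)$. Restricting $\mathrm{Ad}$ to $G_0$ therefore yields a short exact sequence
\[1\to A\to G_0\to Q\to 1,\]
with $A:=G_0\cap Z(L_0)$ and $Q:=\mathrm{Ad}(G_0)\leq GL(\mathfrak{l})\cong GL_n(\bbR)$, where $n=\dim L$. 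The point of this step is that $L_0/Z(L_0)$ is always linear, even when $L_0$ itself is not, so that the a priori non-linear group $G$ is split into a linear and a central piece.

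Finally I would verify finite decomposition complexity for both ends and assemble. The quotient $Q$ is a finitely generated subgroup of $GL_n(\bbR)$, hence a finitely generated linear group, and such groups have finite decomposition complexity by the work of Guentner, Tessera and Yu. The kernel $A$ is a normal abelian subgroup of the countable group $G_0$, hence a countable abelian group, and therefore also has finite decomposition complexity by the direct-union argument above. The extension theorem then gives finite decomposition complexity for $G_0$, and hence for $G$, after which the main result of \cite{KasFDC} yields the claim. I expect the main obstacle to be the structural reduction itself, namely identifying the adjoint representation as the mechanism that converts $G$ into linear and abelian pieces; the remaining work is the careful bookkeeping needed to apply the permanence properties of finite decomposition complexity in exactly the forms required, in particular checking that the passage between $G$ and the finite-index subgroup $G_0$ is harmless.
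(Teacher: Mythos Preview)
Your structural reduction via the adjoint representation is the same one the paper uses, and your argument that $G$ inherits finite decomposition complexity from the linear quotient and the abelian kernel via the extension and subgroup permanence theorems of Guentner--Tessera--Yu is correct. The gap lies in your appeal to \cite{KasFDC}. The main theorem there does \emph{not} assert split injectivity under the hypotheses ``finite decomposition complexity plus a finite-dimensional model for $\underbar EG$'' alone; the relevant notion in \cite{KasFDC} is an equivariant refinement called fqFDC, and even with that in hand one needs in addition a uniform bound on the order of the finite subgroups of $G$. The paper records exactly this obstruction in the remark closing \autoref{sec:nil}: the short exact sequence does give fqFDC for $G$, so the direct argument you sketch would go through \emph{if} one knew such a bound, but it is not known whether a finitely generated subgroup of a virtually connected Lie group with finite-dimensional $\underbar EG$ always has bounded finite subgroups. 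Your proof therefore rests on a hypothesis the paper explicitly declines to assume.

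The paper circumvents this as follows. It applies \cite[Corollary~3]{KasFDC} only to the \emph{linear} quotient $Ad(G)$, where virtual torsion-freeness (Selberg) makes the finite-subgroup issue disappear, after checking that $Ad(G)$ has a bound on the Hirsch length of its unipotent subgroups; this is where the finite-dimensional $\underbar EG$ hypothesis actually enters, via \autoref{prop:bound} and \autoref{cor:nilab}. Split injectivity is then pulled back to $G$ by the inheritance result \autoref{prop:inher}, which requires that the preimage in $G$ of every virtually cyclic subgroup of $Ad(G)$ satisfy the full Farrell--Jones conjecture; those preimages are virtually solvable by \autoref{lem:solv}, so Wegner's theorem applies. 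In short: you and the paper agree on the adjoint-representation decomposition, but the paper invokes \cite{KasFDC} only on the linear quotient and uses Farrell--Jones on the fibers, rather than running a decomposition-complexity argument on all of $G$.
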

A similar version holds for $L$-theory as well, which in particular implies the generalized integral Novikov conjecture for these groups, see \autoref{sec:l-theory}.

If $G$ is a discrete subgroup of a virtually connected Lie group $H$ and $K$ the maximal compact subgroup of $H$, then $H/K$ is a finite-dimensional model for $\underbar EG$, see L\"uck \cite[Theorem 4.4]{MR2195456}. In particular, we get the following corollary.

\begin{cor}
Let $G$ be a finitely generated discrete subgroup of a virtually connected Lie group. Then the $K$-theoretic assembly map
\[H_n^G(\underbar EG;\bbK_\mcA)\to K_n(\mcA[G])\]
is split injective for every additive $G$-category $\mcA$.
\end{cor}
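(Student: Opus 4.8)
The plan is to deduce this directly from \autoref{thm:main}; the only work is to verify its remaining hypothesis, namely the existence of a finite-dimensional model for $\underbar EG$, for an arbitrary finitely generated discrete subgroup $G$ of a virtually connected Lie group. First I would fix such a $G$ together with a virtually connected Lie group $H$ containing it as a discrete subgroup, and let $K\leq H$ be a maximal compact subgroup. By L\"uck \cite[Theorem 4.4]{MR2195456} the homogeneous space $H/K$, equipped with the $G$-action restricted from the $H$-action, is a finite-dimensional model for $\underbar EG$; this is precisely the observation recorded immediately before the statement. In particular the finite-dimensionality hypothesis of \autoref{thm:main} is automatic in this setting and need not be imposed separately.

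Having checked this, I would simply invoke \autoref{thm:main}: $G$ is a finitely generated subgroup of a virtually connected Lie group which admits a finite-dimensional model for $\underbar EG$, so the $K$-theoretic assembly map $H_n^G(\underbar EG;\bbK_\mcA)\to K_n(\mcA[G])$ is split injective for every additive $G$-category $\mcA$. There is no genuine obstacle to overcome here beyond citing the model for $\underbar EG$: the entire content of the corollary is packaged into the assertion that $H/K$ has the correct homotopy-theoretic properties as a space with proper $G$-action, which is exactly what the cited theorem supplies. For this reason I expect the proof to be a single sentence combining the remark preceding the corollary with \autoref{thm:main}, and the only point worth flagging is that the finitely generated hypothesis on $G$ is retained from the theorem while the finite-dimensionality hypothesis is discharged automatically.
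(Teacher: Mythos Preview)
Your proposal is correct and matches the paper's approach exactly: the paper states the corollary immediately after noting that for a discrete subgroup $G$ of a virtually connected Lie group $H$ with maximal compact $K$, the space $H/K$ is a finite-dimensional model for $\underbar EG$ by L\"uck \cite[Theorem 4.4]{MR2195456}, and then applies \autoref{thm:main}. There is nothing to add or correct.
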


The condition on the existence of a finite-dimensional model for $\underbar EG$ can be reformulated in the following way.
\begin{prop}
\label{prop:bound}
A finitely generated subgroup $G$ of a virtually connected Lie group admits a finite-dimensional model for $\underbar EG$ if and only if there exists $N\in\bbN$ such that every finitely generated abelian subgroup of $G$ has rank at most $N$.
\end{prop}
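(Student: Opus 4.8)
The plan is to prove the two implications separately, the engine being L\"uck's characterization \cite[Theorem 3.1]{MR1757730}: since the groups in question are virtually torsion-free, a finite-dimensional model for $\underbar EG$ exists if and only if the virtual cohomological dimension $\mathrm{vcd}(G)$ is finite. The forward implication needs no Lie-theoretic input. Suppose $\underbar EG$ has dimension $d$. Restricting the $G$-CW structure to any subgroup $H\leq G$ produces a model for $\underbar EH$ of dimension at most $d$. Given a finitely generated abelian subgroup $A\leq G$ of rank $r$, choose $\IZ^r\leq A$; as $\IZ^r$ is torsion-free, $\underbar E\IZ^r=E\IZ^r$ is a free contractible $\IZ^r$-CW complex of dimension at most $d$, so $r=\mathrm{cd}(\IZ^r)\leq d$. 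Hence $N=d$ works.

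For the converse, assume the bound $N$ on the ranks of finitely generated abelian subgroups. First I would reduce to the case that the ambient Lie group $L$ is connected: the identity component has finite index, and both the rank bound and finite-dimensionality of $\underbar E$ transfer across finite-index subgroups. Then I would apply the adjoint representation $\mathrm{Ad}\co L\to GL(\mathfrak l)$, whose kernel is the center $Z(L)$, to obtain a central extension $1\to A\to G\to \bar G\to 1$ with $A=G\cap Z(L)$ abelian and $\bar G=\mathrm{Ad}(G)$ a finitely generated subgroup of $GL(\mathfrak l)\leq GL_m(\bbC)$. The hypothesis forces $A$ to have torsion-free rank at most $N$. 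To transfer the hypothesis to $\bar G$, I would lift any finitely generated abelian $\bar B\leq \bar G$ to the finitely generated subgroup $B\leq G$ generated by lifts of its generators: since $A$ is central in $G$, the group $B$ is nilpotent of class at most $2$, and an isotropic subgroup for the commutator pairing $B/Z(B)\times B/Z(B)\to[B,B]$ yields an abelian subgroup of $B$ of rank comparable to $\mathrm{rank}(\bar B)$. Thus the rank bound passes to $\bar G$.

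For the linear group $\bar G$ I would invoke Alperin--Shalen \cite{alperin}: $\mathrm{vcd}(\bar G)<\infty$ if and only if the Hirsch lengths of its unipotent subgroups are bounded. Unipotent subgroups of $GL_m$ are torsion-free nilpotent of class less than $m$, so it remains to show that a bound on the ranks of abelian subgroups caps the Hirsch length of nilpotent subgroups of bounded class; this is exactly the nilpotent-group material developed in \autoref{sec:nil}. Granting $\mathrm{vcd}(\bar G)<\infty$, I would pass to a common torsion-free finite-index subgroup $G_3\leq G$ (using that these groups are virtually torsion-free) and apply the subadditivity $\mathrm{cd}(G_3)\leq \mathrm{cd}(A\cap G_3)+\mathrm{cd}(\bar G_3)$, where $\mathrm{cd}(A\cap G_3)<\infty$ because a torsion-free abelian group of finite rank has finite cohomological dimension, to conclude $\mathrm{vcd}(G)<\infty$ and hence the existence of a finite-dimensional $\underbar EG$. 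The main obstacle I expect is precisely the nilpotent-group estimate feeding both the transfer step (extracting a large abelian subgroup of $B$ from a large $\bar B$) and the Alperin--Shalen step (bounding Hirsch length of bounded-class nilpotent groups from the abelian rank), which will require a genuine induction on nilpotency class via maximal isotropic subgroups rather than any formal manipulation.
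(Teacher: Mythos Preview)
Your overall architecture matches the paper's: use the forward direction via restriction of the $\underbar E$-model, and for the converse run the adjoint representation to get a central extension $1\to K\to G_0\to Ad(G_0)\to 1$, apply Alperin--Shalen to the linear quotient (using the nilpotent material of \autoref{sec:nil} to bound Hirsch lengths from abelian ranks), and then reassemble. There is, however, a genuine gap in how you reassemble.

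You assert that ``the groups in question are virtually torsion-free'' and use this to pass to a torsion-free finite-index subgroup and apply $\mathrm{vcd}$ subadditivity. This is not justified. Selberg gives virtual torsion-freeness only for the linear quotient $Ad(G_0)$; the kernel $K=G_0\cap C_H(H_0)$ is an abelian subgroup of the center of $H_0$, which typically contains a torus, and there is no a priori reason its torsion is finite. The paper does \emph{not} claim $G$ is virtually torsion-free and is visibly written to avoid needing it: instead of going through $\mathrm{vcd}$, it works with $\underbar E$-models directly. First it proves \autoref{lem:dimabelian}, which builds a finite-dimensional $\underbar EK$ for any countable abelian $K$ of finite rank by filtering the (possibly infinite) torsion quotient by finite subgroups. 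Then it feeds the two extensions $1\to K\to G_0\to Ad(G_0)\to 1$ and $1\to G_0\to G\to F\to 1$ into L\"uck's extension theorem \cite[Theorem~3.1]{MR1757730} for $\underbar E$-models (this is where virtual torsion-freeness of $Ad(G_0)$, not of $G$, is used). Your $\mathrm{vcd}$ route would need an independent proof that finitely generated subgroups of virtually connected Lie groups are virtually torsion-free; absent that, the argument does not close.

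A smaller point: your transfer of the abelian rank bound to $\bar G$ via lifting to a class-$2$ nilpotent group and extracting an isotropic subgroup is correct but roundabout. The paper transfers the Hirsch-length bound instead: since $K$ is central, the preimage of any nilpotent $N\le Ad(G_0)$ is nilpotent in $G_0$, so $h(N)\le h(\text{preimage})$ is bounded by \autoref{cor:nilab} applied to $G$. This avoids the separate isotropic-subgroup argument entirely.
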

The rank of an abelian group $A$ is defined as $rk(A):=\dim_\bbQ(A\otimes_\bbZ \bbQ)$ or equivalently as the cardinality of maximal linearly independent subset of $A$. The statement that every finitely generated abelian subgroup of $G$ has rank at most $N$ is equivalent to the statement that every abelian subgroup of $G$ has rank at most $N$. For a proof of the proposition see \autoref{sec:nil}.\\

In \autoref{sec:inheritance} we prove that \autoref{thm:main} and its $L$-theoretic analog also hold without the assumption that $G$ is finitely generated.\\

\textbf{Acknowledgments:} I would like to thank Johannes Ebert for helpful discussions, and Henrik R\"uping and the referee for useful comments and suggestions. This work was partially supported by the SFB 878 ``Groups, Geometry and Actions'' and the Max Planck Society.

\section{Lie groups}
\label{sec:lie}
A Lie group is virtually connected if it has only finitely many connected components. For the rest of this section let $H$ be a virtually connected Lie group with Lie algebra $\mfh$ (which we identify with $T_eH$). The Lie group $H$ acts on itself by conjugation \[c\co H\to Aut(H),~g\mapsto(h\mapsto ghg^{-1}).\]
Taking the derivative yields a map
\[Ad\co H\to Aut(\mfh),~g\mapsto D_e(c(g)).\]
Since $Aut(\mfh)$ is a Lie subgroup of $GL(\mfh)$, $Ad$ gives a representation of $H$. The kernel of the representation $Ad$ is the centralizer $C_H(H_0)$ of the unit component $H_0$ of $H$.

By definition of the centralizer the group $C_H(H_0)\cap H_0$ is abelian and since $H$ is virtually connected the centralizer $C_H(H_0)$ is therefore virtually abelian. For every subgroup $G$ of $H$ we obtain a short exact sequence
\[1\to C_H(H_0)\cap G\to G\to Ad(G)\to 1.\]
with virtually abelian kernel and linear quotient. We will use this sequence to extend the results of \cite{KasFDC} to general virtually connected Lie groups. Before we can do so, we first need to prove \autoref{prop:bound} which will be done in the next chapter.
\section{A bound on the rank of abelian subgroups}
\label{sec:nil}
In the proof of \autoref{prop:bound} a bound on the Hirsch-length of the finitely generated nilpotent subgroups is needed. First we review some facts about nilpotent groups to see that this is the same as a bound on the rank of the finitely generated abelian subgroups.

Let $G$ be a group. Define $G_1:=G$ and recursively $G_{n+1}:=[G_n,G]$. The series $G=G_1\geq G_2\geq...$ is called the \emph{lower central series} of $G$. A group $G$ is \emph{nilpotent} if there exists $c\in\bbN$ with $G_{c+1}=1$. The smallest such $c$ is called the \emph{nilpotency class} of $G$, we denote it by $c(G)$. The \emph{upper central series} $1=Z_0(G)\leq Z_1(G)\leq\ldots$ of $G$ is recursively defined by \[Z_{i+1}(G):=\{g\in G\mid \forall h\in G: [g,h]\in Z_i(G)\}.\]
If $G$ is nilpotent then $Z_{c(G)}(G)=G$ and the length of the upper and lower central series agree. For any normal subgroup $H\leq G$ the quotient $G/H$ is again nilpotent.

The \emph{Hirsch-length} $h(G)$ of $G$ is
\[h(G):=rk(G_1/G_2)+...+rk(G_{c-1}/G_c)+rk(G_c),\]
where $rk(H)$ denotes the rank of an abelian group $H$, i.e. $rk(H):=\dim_\bbQ(H\otimes_\bbZ\bbQ)$.

Let $n(G)$ denote \[\max\{rk(A)\mid A\unlhd G\text{~an abelian normal subgroup}\}.\]

Let $H$ be a group and $G$ be a group acting on $H$. $G$ \emph{acts nilpotently} if there is a series
\[1=H_0\leq H_1\leq..\leq H_n=H\]
of $G$-invariant normal subgroups of $H$ such that the induced action on $H_i/H_{i-1}$ is trivial. In the special case where $H=G$ and the action is by conjugation $G$ acts nilpotently on itself if and only if $G$ is nilpotent. 
\begin{prop}
\label{prop:nilab}
Let $G$ be finitely generated nilpotent. Then $h(G)\leq \frac{n(G)(n(G)+1)}{2}$.
\end{prop}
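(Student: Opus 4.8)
The plan is to locate a single abelian normal subgroup $A\unlhd G$ whose rank already controls the entire Hirsch length, and to compute $h(G)$ from the extension $1\to A\to G\to G/A\to 1$. Concretely, I would take $A$ to be a \emph{maximal} (with respect to inclusion) abelian normal subgroup of $G$. Since subgroups of finitely generated nilpotent groups are again finitely generated, $A$ is finitely generated abelian, and by the definition of $n(G)$ we have $m:=rk(A)\le n(G)$. Using that the Hirsch length is additive along extensions of finitely generated nilpotent groups, $h(G)=h(A)+h(G/A)=m+h(G/A)$, so everything reduces to the estimate $h(G/A)\le\frac{m(m-1)}{2}$, which then gives
\[
h(G)\le m+\frac{m(m-1)}{2}=\frac{m(m+1)}{2}\le\frac{n(G)(n(G)+1)}{2}.
\]

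The first key step is the classical fact that a maximal abelian normal subgroup of a nilpotent group is self-centralizing, i.e. $C_G(A)=A$. I would argue by contradiction: the centralizer $C_G(A)$ is normal in $G$ because $A$ is, so if it strictly contained $A$, then $C_G(A)/A$ would be a nontrivial normal subgroup of the nilpotent group $G/A$ and would therefore meet its center. Choosing $g\in C_G(A)\setminus A$ with $gA$ central in $G/A$, the subgroup $\langle A,g\rangle$ is abelian (as $g$ centralizes $A$) and normal in $G$ (as $\langle A,g\rangle/A$ is central in $G/A$), contradicting the maximality of $A$.

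The self-centralizing property makes $G$ act faithfully on $A$ by conjugation, and since $A\unlhd G$ with $G$ nilpotent the iterated commutators $[A,G,\ldots,G]$ vanish, so the action is nilpotent. I would then pass to the induced action on $A\otimes_\bbZ\bbQ\cong\bbQ^m$, obtaining a representation $\rho\co G/A\to GL_m(\bbQ)$ whose image is a finitely generated unipotent group; by Kolchin's theorem it is conjugate into the upper unitriangular group, whose filtration by superdiagonals shows that any finitely generated subgroup has Hirsch length at most $\sum_{k=1}^{m-1}(m-k)=\frac{m(m-1)}{2}$. The point I expect to require the most care is that passing from the faithful action on $A$ to the action on $A\otimes_\bbZ\bbQ$ costs no Hirsch length, i.e. that $\ker\rho$ is finite. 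I would handle this by observing that for $g\in\ker\rho$ the assignment $a\mapsto[g,a]$ is a homomorphism from $A$ into its (finite) torsion subgroup $T(A)$, and that $g\mapsto(a\mapsto[g,a])$ induces a map $G/A\to\operatorname{Hom}(A,T(A))$ with kernel exactly $C_G(A)/A=1$. As $A$ is finitely generated and $T(A)$ finite, $\operatorname{Hom}(A,T(A))$ is finite, so $\ker\rho$ is finite and hence $h(G/A)=h(\operatorname{im}\rho)\le\frac{m(m-1)}{2}$, which completes the estimate.
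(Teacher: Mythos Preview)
Your proof is correct and follows the same strategy as the paper: take a maximal abelian normal subgroup $A$, show it is self-centralizing, embed $G/A$ (modulo a finite kernel) into the unitriangular $m\times m$ matrices with $m=\operatorname{rk}(A)$, and sum Hirsch lengths. The only difference is cosmetic---the paper stays over $\bbZ$ (proving a separate lemma that a nilpotent action on $\bbZ^m$ is $GL_m(\bbZ)$-conjugate into $Tr(m,\bbZ)$, and observing that $\Aut(A)\to GL_m(\bbZ)$ has finite kernel) while you tensor with $\bbQ$ and invoke Kolchin; note that your commutator map $g\mapsto(a\mapsto[g,a])$ into $\operatorname{Hom}(A,T(A))$ is only defined on $\ker\rho$, not on all of $G/A$, but that is precisely what you need.
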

The proposition is proved in M\"ohres {\cite[Theorem 2]{MR921696}} for torsion-free nilpotent groups instead of finitely generated nilpotent groups. For convenience of the reader we give a proof. For this we need the following well-known statements about nilpotent groups.
\begin{lemma}
\label{lem:fgsub}
A subgroup of a finitely generated nilpotent group is finitely generated.
\end{lemma}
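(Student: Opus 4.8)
The plan is to prove the equivalent (and standard) statement that every finitely generated nilpotent group $G$ is \emph{Noetherian}, i.e.\ satisfies the maximal condition on subgroups --- which is exactly the assertion that \emph{every} subgroup is finitely generated. I would isolate two formal inputs. First, an extension principle: if $N\trianglelefteq G$ with both $N$ and $G/N$ Noetherian, then $G$ is Noetherian. Indeed, for any $H\leq G$ the intersection $H\cap N$ is a subgroup of the Noetherian group $N$, hence finitely generated, while $H/(H\cap N)\cong HN/N$ is a subgroup of the Noetherian group $G/N$, hence finitely generated; choosing lifts of a generating set of the quotient together with a generating set of $H\cap N$ produces a finite generating set of $H$. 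Second, finitely generated abelian groups are Noetherian, since submodules of finitely generated $\bbZ$-modules are finitely generated ($\bbZ$ being a Noetherian ring).

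With these in hand I would run along the lower central series $G=G_1\geq G_2\geq\cdots\geq G_{c+1}=1$. The crucial point is that each factor $G_i/G_{i+1}$ is a \emph{finitely generated} abelian group. Granting this, one argues by ascending induction on $i$: the quotient $G/G_2$ is finitely generated abelian and hence Noetherian, and from the extension $1\to G_i/G_{i+1}\to G/G_{i+1}\to G/G_i\to 1$ with Noetherian kernel and (inductively) Noetherian quotient, the extension principle gives that $G/G_{i+1}$ is Noetherian. Taking $i=c$ yields that $G=G/G_{c+1}$ is Noetherian, which is the lemma.

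It remains to establish the key point, and this is where nilpotency is genuinely used. Fixing a finite generating set $x_1,\ldots,x_k$ of $G$, I would prove by induction on $i$ that $G_i/G_{i+1}$ is generated, as an abelian group, by the images of the left-normed commutators $[x_{j_1},\ldots,x_{j_i}]$ of weight $i$; since there are only finitely many of these for each fixed $i$, finite generation follows. The inductive step rests on the observation that $G_{i+1}/G_{i+2}$ is central in $G/G_{i+2}$ together with the congruences $[ab,c]\equiv[a,c][b,c]$ and $[a,bc]\equiv[a,b][a,c]$ modulo $G_{i+2}$, which reduce a generating commutator $[g,h]$ of $G_{i+1}=[G_i,G]$ to brackets $[w,x_j]$ with $w$ a weight-$i$ commutator and $x_j$ a generator. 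The main obstacle is precisely this commutator-generation statement: everything else is formal bookkeeping, but one does need it, because a central --- hence abelian --- subgroup of a finitely generated group need not itself be finitely generated, so only the nilpotent structure guarantees that $G_c$ and the other lower central factors are finitely generated.
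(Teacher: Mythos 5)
Your proposal is correct and is essentially the argument the paper has in mind: the paper's proof is the one-line sketch ``induction on the nilpotency class,'' and your write-up is exactly that induction carried out in full, via the Noetherian extension principle along the lower central series and the standard commutator identities showing each factor $G_i/G_{i+1}$ is finitely generated abelian. You also correctly flag the genuine subtlety (central subgroups of finitely generated groups need not be finitely generated) that the paper's terse proof leaves implicit.
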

\begin{proof}
The statement follows by induction on the nilpotency class.
\end{proof}
\begin{lemma}[{\cite[Theorem 1.3]{warfield}}]
Let $G$ be nilpotent and $N\unlhd G$ a non-trivial normal subgroup. Then $N\cap Z(G)$ is non-trivial, where $Z(G)$ denotes the center of $G$.
\end{lemma}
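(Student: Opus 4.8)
The plan is to run an induction along the upper central series of $G$, exploiting the interplay between the normality of $N$ and the defining property of the terms $Z_i(G)$. Recall from above that $G$ nilpotent means $Z_c(G)=G$ for $c:=c(G)$, and that $Z_{i+1}(G)$ consists of those $g\in G$ with $[g,h]\in Z_i(G)$ for all $h\in G$.

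First I would track the intersections $N\cap Z_i(G)$ as $i$ ranges from $0$ to $c$. Since $Z_0(G)=1$ we have $N\cap Z_0(G)=1$, whereas $N\cap Z_c(G)=N\cap G=N\neq 1$ because $N$ is non-trivial. Hence there is a smallest index $i$ with $N\cap Z_i(G)\neq 1$, and this index satisfies $i\geq 1$; by minimality $N\cap Z_{i-1}(G)=1$.

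Next I would show that this smallest non-trivial intersection already lands in the center. Fix any $x\in N\cap Z_i(G)$ and any $h\in G$. On the one hand, $x\in Z_i(G)$ yields $[x,h]\in Z_{i-1}(G)$ directly from the definition of the upper central series. On the other hand, $x\in N$ together with $N\unlhd G$ gives $[x,h]=x(hx^{-1}h^{-1})\in N$, since $hx^{-1}h^{-1}\in N$. Combining these, $[x,h]\in N\cap Z_{i-1}(G)=1$, so $x$ commutes with every $h\in G$; that is, $x\in Z(G)=Z_1(G)$. This proves the inclusion $N\cap Z_i(G)\subseteq N\cap Z(G)$, and since the left-hand side is non-trivial by the choice of $i$, we conclude that $N\cap Z(G)\neq 1$.

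The argument is essentially complete once the indexing is set up correctly; the only point requiring care is the minimality step, which is exactly what guarantees $N\cap Z_{i-1}(G)=1$ and thereby forces the commutator $[x,h]$ to vanish. I expect no serious obstacle here. It is worth noting that the proof uses neither finite generation nor any finiteness of the nilpotency class beyond $Z_c(G)=G$, so the statement holds for every nilpotent group $G$.
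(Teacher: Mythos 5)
Your proof is correct, and in fact the paper gives no proof of this lemma at all --- it simply cites Warfield \cite[Theorem 1.3]{warfield}. Your argument (take the minimal $i$ with $N\cap Z_i(G)\neq 1$, then use normality of $N$ to force $[x,h]\in N\cap Z_{i-1}(G)=1$) is the standard proof found there, carried out correctly, including the observation that only $Z_c(G)=G$ is needed and not finite generation.
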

\begin{lemma}
Let $G$ be nilpotent and $A$ a maximal abelian normal subgroup. Then $C_G(A)=A$, where $C_G(A)$ is the centralizer of $A$ in $G$.
\end{lemma}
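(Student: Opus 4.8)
The plan is to prove the two inclusions $A\subseteq C_G(A)$ and $C_G(A)\subseteq A$ separately. The first is immediate: since $A$ is abelian, every element of $A$ commutes with every element of $A$, so $A\subseteq C_G(A)$. The content of the lemma is the reverse inclusion, which I would establish by contradiction, using the maximality of $A$ together with the two preceding lemmas.

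First I would record that $C:=C_G(A)$ is itself normal in $G$. This follows from the normality of $A$: for $g\in G$, $c\in C$ and $a\in A$ we have $g^{-1}ag\in A$, so $c$ fixes it under conjugation, and the computation $(gcg^{-1})a(gcg^{-1})^{-1}=gc(g^{-1}ag)c^{-1}g^{-1}=g(g^{-1}ag)g^{-1}=a$ shows that $gcg^{-1}$ centralizes $a$; hence $gCg^{-1}=C$. Since $A\subseteq C$ and both are normal in $G$, the quotient $C/A$ is a normal subgroup of the nilpotent group $G/A$.

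Now suppose for contradiction that $A\neq C$, so that $C/A$ is a non-trivial normal subgroup of $G/A$. Applying the lemma of Warfield to the nilpotent group $G/A$ and its non-trivial normal subgroup $C/A$, I obtain a non-trivial element of $(C/A)\cap Z(G/A)$. Concretely, this produces an element $c\in C\setminus A$ whose image is central in $G/A$, i.e.\ $[c,g]\in A$ for every $g\in G$. To finish I would enlarge $A$: set $B:=\langle A,c\rangle$. Since $c\in C$ centralizes $A$ and $A$ is abelian, every element of $B$ can be written as $ac^k$ with $a\in A$ and $k\in\bbZ$, and two such elements commute, so $B$ is abelian; since $A$ is normal and $gcg^{-1}\in cA\subseteq B$ for all $g$, the subgroup $B$ is normal in $G$. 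As $c\notin A$, this $B$ is an abelian normal subgroup strictly containing $A$, contradicting the maximality of $A$. Hence $C_G(A)=A$.

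The argument is short, and the main point is simply to recognize that the two supplied lemmas dovetail: passing to $G/A$ converts the problem into producing a single element outside $A$ that is central modulo $A$, and the only step requiring (minor) care is the simultaneous verification that the enlarged subgroup $B$ is both abelian and normal in $G$.
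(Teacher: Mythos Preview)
Your proof is correct and follows essentially the same route as the paper: both pass to $G/A$, invoke Warfield's lemma to find a nontrivial element of $(C_G(A)/A)\cap Z(G/A)$, lift it to some $c\in C_G(A)\setminus A$, and check that $\langle A,c\rangle$ is an abelian normal subgroup strictly larger than $A$. Your write-up is just a bit more explicit in verifying that $C_G(A)\unlhd G$ and that $\langle A,c\rangle$ is abelian and normal, but there is no substantive difference.
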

\begin{proof}
Since $A\unlhd G$ is normal, so is $C_G(A)$. Suppose $A\neq C_G(A)$. Then $C_G(A)/A$ is a non-trivial normal subgroup of $G/A$ and $H:=C_G(A)/A\cap Z(G/A)$ is non-trivial by the previous lemma. Let $C=\langle c\rangle$ be a cyclic subgroup of $H$. Then $C\unlhd Z(G/A)\unlhd G/A$ and since $C$ lies in the center it is a normal subgroup of $G/A$. Let $c'\in C_G(A)$ be a pre-image of $c$, then the pre-image of $C$ is $\langle A,c'\rangle$. This is abelian and normal in $G$, hence $A$ was not maximal with this property.
\end{proof}
\begin{lemma} Let $Tr(n,\bbZ)\leq GL_n(\bbZ)$ denote the subgroup of unitriangular matrices, i.e. every element of $Tr(n,\bbZ)$ has $1$'s on the diagonal and $0$'s below the diagonal. If $G\leq GL_n(\bbZ)$ acts nilpotently on $\bbZ^n$, then it is unipotent and conjugate to a subgroup of $Tr(n,\bbZ)$.
\end{lemma}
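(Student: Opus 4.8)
The plan is to treat the two assertions separately, deriving unipotence directly from the nilpotent action and then upgrading the given invariant filtration to one by direct summands so that an adapted integral basis can be chosen.

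By definition of a nilpotent action there is a chain $0=H_0\leq H_1\leq\dots\leq H_m=\bbZ^n$ of $G$-invariant subgroups on whose successive quotients $G$ acts trivially. Triviality of the action on $H_i/H_{i-1}$ means precisely that $(g-I)H_i\subseteq H_{i-1}$ for every $g\in G$, where $I$ is the identity matrix. Iterating this inclusion gives $(g-I)^m\bbZ^n\subseteq H_0=0$, so $(g-I)^m=0$ on $\bbZ^n$ and hence on $\bbQ^n$; thus every $g\in G$ is unipotent, which settles the first claim.

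For the second claim I would replace each $H_i$ by its saturation $\overline{H_i}:=(H_i\otimes_\bbZ\bbQ)\cap\bbZ^n$, the intersection being taken inside $\bbQ^n$. Each $\overline{H_i}$ is a pure subgroup of $\bbZ^n$, hence a direct summand, and the chain $0=\overline{H_0}\leq\dots\leq\overline{H_m}=\bbZ^n$ is again $G$-invariant. The point to verify is that the inclusion $(g-I)H_i\subseteq H_{i-1}$ persists after saturation: tensoring with $\bbQ$ gives $(g-I)(H_i\otimes\bbQ)\subseteq H_{i-1}\otimes\bbQ$, and since $g-I$ has integer entries it maps $\overline{H_i}=(H_i\otimes\bbQ)\cap\bbZ^n$ into $(H_{i-1}\otimes\bbQ)\cap\bbZ^n=\overline{H_{i-1}}$. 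So $G$ still acts trivially on each $\overline{H_i}/\overline{H_{i-1}}$, and these quotients are free abelian because the $\overline{H_i}$ are pure, hence split off.

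Since each step of the saturated chain is a direct summand, I can choose a basis $e_1,\dots,e_n$ of $\bbZ^n$ such that $\overline{H_i}$ is spanned by $e_1,\dots,e_{d_i}$, where $d_i=rk(\overline{H_i})$. The relation $(g-I)\overline{H_i}\subseteq\overline{H_{i-1}}$ then says that for a basis vector $e_j$ with $d_{i-1}<j\leq d_i$ one has $ge_j=e_j+w$ with $w$ a $\bbZ$-combination of $e_1,\dots,e_{d_{i-1}}$; reading this columnwise shows that the matrix of $g$ in the basis $(e_j)$ has $1$'s on the diagonal and $0$'s below it, i.e. lies in $Tr(n,\bbZ)$. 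Conjugating by the change-of-basis matrix, which lies in $GL_n(\bbZ)$ because $(e_j)$ is a $\bbZ$-basis, then carries $G$ into $Tr(n,\bbZ)$. The one genuinely nonformal point, and the step I expect to require the most care, is the passage to the saturation: the subgroups in the given filtration need not be direct summands, so one cannot choose an adapted integral basis directly, and the purification argument above is exactly what repairs this while preserving both $G$-invariance and triviality of the quotient actions.
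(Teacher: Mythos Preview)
Your proof is correct and follows essentially the same approach as the paper: both arguments replace the given filtration by its saturation (purification) so that the terms become direct summands of $\bbZ^n$, and then read off the unitriangular form from an adapted $\bbZ$-basis. The only cosmetic differences are that the paper saturates one step at a time and proceeds by induction on the filtration length rather than saturating all $H_i$ simultaneously, and that it deduces unipotence from the conjugacy into $Tr(n,\bbZ)$ rather than verifying $(g-I)^m=0$ separately.
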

\begin{proof}
Since $Tr(n,\bbZ)$ is unipotent, it suffices to prove that $G$ is conjugate to a subgroup of it. Let
\[0=H_0\unlhd H_1\unlhd H_2\unlhd\ldots\unlhd H_k=\bbZ^n\]
be a sequence of $G$-invariant subspaces and let $G$ act trivially on $H_i/H_{i-1}$ for all $i=1,\ldots,k$. 
The lemma is obvious for $k=1$ and we will prove it by induction on $k$.
Let $H':=\{z\in\bbZ^n\mid \exists l\in \bbZ:lz\in H_1\}$. Let $z\in H',l\in\bbZ$ with $lz\in H_1$. For every $g\in G$ we have $lg(z)=g(lz)=lz$ and thus also $g(z)=z$, i.e. $G$ acts trivially on $H'$. By construction $\bbZ^n/H'$ is torsion-free and we obtain a splitting $\bbZ^n\cong H'\oplus \bbZ^n/H'$. The sequence
\[0=H'/H'\unlhd H_2+H'/H'\unlhd\ldots\unlhd H_k+H'/H'=\bbZ^n/H'\]  consists of $G$-invariant subspaces and $G$ acts trivially on the quotients. By induction there is a basis of $\bbZ^n/H$ such that $G\leq GL(\bbZ^n/H)$ is unitriangular. Using this basis together with a basis of $H'$ yields a basis of $\bbZ^n$ for which $G$ lies in $Tr(n,\bbZ)$.
\end{proof}
\begin{proof}[Proof of \autoref{prop:nilab}]
Let $n:=n(G)$ and $A$ be a maximal abelian normal subgroup. Then $A$ again is finitely generated by \autoref{lem:fgsub}\comment{cite[p. 163 Exercise 17.(a)]{bourbakialgebra}} and $A\cong \bbZ^n\oplus F$ with $F$ a finite group. $G$ acts by conjugation on $A$ and since $C_G(A)=A$, the induced map $G/A\to aut(A)$ is injective. Since $F$ is finite the projection to $aut(\bbZ^n)=GL_n(\bbZ)$ has finite kernel. The group $G$ is nilpotent and thus it acts nilpotently on $\bbZ^n$ (by conjugation). This implies that the image $G/A$ in $GL_n(\bbZ)$ is conjugate to a subgroup of the unitriangular matrices $Tr(n,\bbZ)$. Since $h(Tr(n,\bbZ))=\frac{n(n-1)}{2}$ we have 
\begin{eqnarray*}h(G)&\leq& h(A)+h(\ker(aut(A)\to GL_n(\bbZ)))+h(Tr(n,\bbZ)) \\ &=&n+0+\frac{n(n-1)}{2}=\frac{n(n+1)}{2}.\end{eqnarray*}
\end{proof}
A direct corollary of \autoref{prop:nilab} is the following.
\begin{cor}
\label{cor:nilab}
Let $G$ be a group. Then $G$ has a bound on the Hirsch-length of its finitely generated nilpotent subgroups if and only if it has a bound on the rank of its finitely generated abelian subgroups.
\end{cor}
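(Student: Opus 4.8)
The plan is to prove the two implications separately, invoking \autoref{prop:nilab} only for the harder direction. For the easy direction, suppose $G$ has a bound $M$ on the Hirsch-length of its finitely generated nilpotent subgroups. Any finitely generated abelian subgroup $A\leq G$ is in particular finitely generated nilpotent (of nilpotency class one), and for an abelian group the lower central series terminates immediately with $A_2=[A,A]=1$, so that $h(A)=rk(A)$ directly from the definition of the Hirsch-length. Hence $rk(A)\leq M$ and one may take $N:=M$. Note that this direction does not require the proposition at all.

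For the converse, suppose $G$ has a bound $N$ on the rank of its finitely generated abelian subgroups, and let $P\leq G$ be an arbitrary finitely generated nilpotent subgroup; the goal is to bound $h(P)$ independently of $P$. By \autoref{prop:nilab} we have $h(P)\leq\frac{n(P)(n(P)+1)}{2}$, so it suffices to bound $n(P)=\max\{rk(A)\mid A\unlhd P\text{ an abelian normal subgroup}\}$. The key observation is that every such witness $A$ is a subgroup of the finitely generated nilpotent group $P$, and hence is itself finitely generated by \autoref{lem:fgsub}; being also an abelian subgroup of $G$, it satisfies $rk(A)\leq N$. Therefore $n(P)\leq N$, and consequently $h(P)\leq\frac{N(N+1)}{2}$, which is the desired uniform bound.

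I do not expect any genuine obstacle once \autoref{prop:nilab} is available, since the corollary is essentially a repackaging of it. The one point that must be handled carefully is that the hypothesis in direction $(2)\Rightarrow(1)$ bounds the rank only of \emph{finitely generated} abelian subgroups, whereas the quantity $n(P)$ a priori ranges over all abelian normal subgroups of $P$. The passage between the two relies precisely on \autoref{lem:fgsub} to ensure that these abelian normal subgroups are automatically finitely generated, after which the bound on $G$ applies verbatim.
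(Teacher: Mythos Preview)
Your argument is correct and is exactly the unpacking of why the paper calls this ``a direct corollary of \autoref{prop:nilab}'': the easy direction uses $h(A)=rk(A)$ for abelian $A$, and the other direction applies \autoref{prop:nilab} to each finitely generated nilpotent $P\leq G$, with \autoref{lem:fgsub} ensuring that the abelian normal subgroups witnessing $n(P)$ are themselves finitely generated so that the hypothesis on $G$ applies. There is nothing to add.
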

Before we can prove \autoref{prop:bound} we need the following lemma.
\begin{lemma}
\label{lem:dimabelian}
Let $A$ be a (countable) abelian group with finite rank, then there is a finite-dimensional model for $\underbar EA$.
\end{lemma}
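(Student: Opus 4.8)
The plan is to realize $\underbar EA$ as a homotopy colimit of classifying spaces of the finitely generated subgroups of $A$ and then to control the dimension of this homotopy colimit, the point being that each finitely generated piece is essentially Euclidean while the colimit costs at most one extra dimension. Write $r:=rk(A)<\infty$. Since $A$ is countable I would first express it as an increasing union $A=\bigcup_{n\in\bbN}A_n$ of finitely generated subgroups $A_0\leq A_1\leq\ldots$. Each $A_n$ is a finitely generated abelian group, and because $\bbQ$ is flat the inclusion $A_n\hookrightarrow A$ yields $rk(A_n)\leq r$; hence $A_n\cong\bbZ^{s_n}\oplus F_n$ with $s_n\leq r$ and $F_n$ finite. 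Letting $A_n$ act on $\IR^{s_n}$ through the projection $A_n\to\bbZ^{s_n}$ by translations gives a proper action whose point stabilizers all equal $F_n$; for every finite subgroup $H\leq A_n$ one has $H\leq F_n$, so $(\IR^{s_n})^H=\IR^{s_n}$ is contractible. Thus $\IR^{s_n}$ is an $s_n$-dimensional model for $\underbar EA_n$, and in particular each $\underbar EA_n$ admits a model of dimension at most $r$.

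Next I would invoke the colimit construction for classifying spaces of families due to L\"uck. Every finite subgroup of $A$ is finitely generated, hence contained in some $A_n$, and the finite subgroups of $A_n$ are exactly the finite subgroups of $A$ that lie in $A_n$; so the family of finite subgroups of $A$ is compatible with the filtration $(A_n)$. Choosing for each $n$ an $A_n$-map $\underbar EA_n\to\mathrm{res}_{A_n}\underbar EA_{n+1}$, which exists by the universal property since the restriction of $\underbar EA_{n+1}$ to $A_n$ is again a model for $\underbar EA_n$, and inducing up, I obtain a sequence of $A$-maps
\[\Ind_{A_0}^A\underbar EA_0\to\Ind_{A_1}^A\underbar EA_1\to\cdots\]
whose mapping telescope is a model for $\underbar EA$.

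To finish I would bound the dimension: induction $\Ind_{A_n}^A=A\times_{A_n}(-)$ preserves the dimension of $A_n$-CW complexes, so every term of the telescope has dimension at most $r$, and forming the mapping telescope of a sequence raises the dimension by at most one. Hence $\underbar EA$ has a model of dimension at most $r+1$, which is finite. The extra dimension cannot be removed in general: already $\bbQ$, of rank one, admits no model of dimension below two. The step I expect to be the main obstacle is precisely the justification that this telescope is a model for $\underbar EA$ — verifying the compatibility hypotheses of the colimit theorem and that passing to the \emph{sequential} homotopy colimit costs only a single dimension. This is exactly where countability of $A$ is used, through the choice of a sequential rather than a general directed filtration. (An alternative route would avoid the explicit telescope by bounding the Bredon cohomological dimension $\underbar{\mathrm{cd}}(A)\leq\sup_n\underbar{\mathrm{cd}}(A_n)+1\leq r+1$ and then applying the comparison of L\"uck--Meintrup between Bredon cohomological and geometric dimension.)
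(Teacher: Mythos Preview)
Your argument is correct and gives the same bound $r+1$ as the paper, but the route is genuinely different. The paper does not filter $A$ by finitely generated subgroups; instead it picks a subgroup $B\cong\bbZ^n$ with $n=rk(A)$, observes that the quotient $Q=A/B$ is a countable torsion group, builds an explicit $1$-dimensional model for $\underbar EQ$ as a tree on the vertex set $\coprod_n Q/F_n$ for an exhausting chain of finite subgroups $F_n$, and then feeds the extension $1\to B\to A\to Q\to 1$ into L\"uck's extension theorem \cite[Theorem~3.1]{MR1757730}, using that each preimage $p^{-1}(F)$ is finitely generated abelian of rank $n$ and hence has $\bbR^n$ as an $n$-dimensional $\underbar E$. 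What the paper's approach buys is that the passage from the pieces to $A$ is handled by a black-box theorem already cited elsewhere in the article, so no telescope verification is needed. What your approach buys is that it avoids the extension machinery entirely and makes the ``one extra dimension'' visible as the telescope direction; the step you flag as the main obstacle (checking that the telescope of the $\Ind_{A_n}^A\underbar EA_n$ really is a model for $\underbar EA$) is genuine but routine: isotropy groups in each induced space are finite subgroups of $A_n$, and for a finite $H\leq A$ the $H$-fixed set of the telescope is the telescope of the spaces $A\times_{A_n}(\underbar EA_n)^H$ for $n$ large, each of which is homotopy equivalent to the discrete set $A/A_n$, with colimit a point. Your remark that $\bbQ$ shows the extra dimension is unavoidable is also correct.
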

\begin{proof}
Let $rk A=n$. Then there exists a subgroup $B\leq A$ isomorphic to $\bbZ^n$. The quotient $Q:=A/B$ has rank $0$ and thus is a torsion group. For $n\in\bbN$ let $F_n\leq Q$ be finite subgroups with $F_n\leq F_{n+1}$ and $Q=\bigcup_{n\in\bbN}F_n$. Define a $Q$-CW complex $X$ by taking $\coprod_{n\in\bbN}Q/F_n$ as the zero skeleton and for every $n\in\bbN$ adding a 1-cell with stabilizer $F_{n}$ between the 0-cells $Q/F_n$ and $Q/F_{n+1}$. This defines a 1-dimensional model $X$ for $\underbar EQ$. Let $p\co A\to Q$ be the quotient map. For every finite subgroup $F\leq Q$ the preimage $p^{-1}(F)$ is finitely generated abelian of rank $n$ and thus has $\bbR^n$ as an $n$-dimensional model for $\underbar Ep^{-1}(Q)$. Therefore, the proof of L\"uck \cite[Theorem~3.1]{MR1757730} shows that $A$ has a model for $\underbar EA$ of dimension $n+1$. 
\end{proof}
Let $G$ be a subgroup of $GL_n(\bbC)$ and assume there exists $N\in\bbN$ such that the rank of every finitely generated unipotent subgroup of $G$ is at most $N$. Then by Alperin and Shalen \cite{alperin} the virtual cohomological dimension of $G$ is bounded and therefore admits a finite-dimensional model for $\underbar EG$ by \cite[Theorem 6.4]{MR1757730}. Using this we now can prove \autoref{prop:bound}.
\begin{proof}[Proof of \autoref{prop:bound}]
Let $G$ be a subgroup of a virtually connected Lie group $H$ such that there exists a finite dimensional model $X$ for $\underbar EG$. Then in particular $X$ is a model for $\underbar EA$ for every abelian subgroup $A\leq G$ and $rk A\leq \dim X$. 

For the other direction let $G$ be a finitely generated subgroup of a virtually connected Lie group $H$ such that there exists a bound on the rank of the finitely generated abelian subgroups of $G$. Then by \autoref{cor:nilab} $G$ has also a bound on the Hirsch-length of its finitely generated nilpotent subgroups. Let $G_0:=G\cap H_0$ and consider the extension 
\[1\to C_H(H_0)\cap G_0\to G_0\to Ad(G_0)\to 1\]
from \autoref{sec:lie}. Since $C_H(H_0)\cap G_0$ is contained in the center of $G_0$ also $Ad(G_0)$ has a bound on the the Hirsch-length of its finitely generated nilpotent subgroups and thus on the finitely generated unipotent subgroups. By the above it admits a finite dimensional model for $\underbar EAd(G_0)$.  And since also $K:=C_H(H_0)\cap G_0$ has finite rank, there is a finite dimensional model for $\underbar EK$ by \autoref{lem:dimabelian}. 
Consider the extensions
\[1\to K\to G_0\to Ad(G_0)\to 1\]
and
\[1\to G_0\to G\to F\to 1\]
with $F$ finite. The group $G_0$ is finitely generated since finite index subgroups of finitely generated groups are again finitely generated. Thus $Ad(G_0)$ is virtually torsion-free by Selberg's Lemma and we can use \cite[Theorem 3.1]{MR1757730} to obtain a finite dimensional model for $\underbar EG$ from these sequences.
\end{proof}

\begin{rem}
Using the results of the author from \cite{KasFDC} the short exact sequence \[1\to C_H(H_0)\cap G\to G\to Ad(G)\to 1\] implies that $G$ has fqFDC, which also is defined in \cite{KasFDC}. In particular, if $G$ has a bound on the order of the finite subgroups, then the main result of \cite{KasFDC} directly implies the split injectivity of the $K$-theoretic assembly map and a similar result in $L$-theory. Since we do not know if this always holds, we use a different approach using inheritance properties, see Sections \ref{sec:inh} and \ref{sec:proof}.
\end{rem}
\section{Inheritance properties}
\label{sec:inh}
To use the short exact sequence from \autoref{sec:lie} we want to show the following inheritance property.
\begin{prop}
\label{prop:inher} 
Assume there is a short exact sequence of groups
\[1\to J\to G\xrightarrow{\phi} Q\to 1\]
such that for every virtually cyclic subgroup $V\leq Q$ the pre-image $\phi^{-1}(V)$ satisfies the Farrell--Jones conjecture. Furthermore, assume that the assembly map
\[H_n^G(\underbar EQ;\bbK_\mcB)\to K_n(\mcB[Q])\]
is split injective for every $n\in\bbZ$ and every additive $Q$-category $\mcB$. Then the $K$-theoretic assembly map
\[H_n^G(\underbar EG;\bbK_\mcA)\to K_n(\mcA[G])\]
is split injective for every $n\in\bbZ$ and every additive $G$-category $\mcA$.
\end{prop}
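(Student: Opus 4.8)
The plan is to factor the $G$-assembly map through pulled-back families, to identify its ``upper'' piece with the assembly map for $Q$ via an induction isomorphism, and to split its ``lower'' piece using Bartels' theorem together with the transitivity principle. For a family $\mcF$ of subgroups of $Q$ write $\phi^*\mcF:=\{H\leq G\mid \phi(H)\in\mcF\}$ for the pulled-back family of subgroups of $G$. Since $\Fin\subseteq\phi^*\Fin$, the $K$-theoretic assembly map $H_n^G(\underbar EG;\bbK_\mcA)\to K_n(\mcA[G])$ factors as
\[
H_n^G(E_{\Fin}G;\bbK_\mcA)\xrightarrow{\;a\;}H_n^G(E_{\phi^*\Fin}G;\bbK_\mcA)\xrightarrow{\;b\;}H_n^G(pt;\bbK_\mcA),
\]
and it suffices to prove that both $a$ and $b$ are split injective.

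For $b$ I would invoke the induction structure of equivariant $K$-homology. The restriction $\mathrm{res}_\phi\underbar EQ$ of a model for $\underbar EQ$ along $\phi$ is a model for $E_{\phi^*\Fin}G$: its $H$-fixed points are $(\underbar EQ)^{\phi(H)}$, which are contractible exactly when $\phi(H)$ is finite, and as a $G$-$CW$-complex it has isotropy groups of the form $\phi^{-1}(L)$ with $L$ finite, hence in $\phi^*\Fin$. Following Davis--L\"uck \cite{davislueck} together with the induction of coefficients of Bartels--Reich \cite{coefficients}, there is an additive $Q$-category $\mcB$ with $K_n(\mcB[L])\cong K_n(\mcA[\phi^{-1}(L)])$ for every $L\leq Q$ and a natural isomorphism
\[
H_n^G(\mathrm{res}_\phi Y;\bbK_\mcA)\cong H_n^Q(Y;\bbK_\mcB)
\]
for every $Q$-$CW$-complex $Y$. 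Applying this to $Y=\underbar EQ$ and to $Y=pt$, and using $\phi^{-1}(Q)=G$, identifies $b$ with the $Q$-assembly map $H_n^Q(\underbar EQ;\bbK_\mcB)\to K_n(\mcB[Q])$, which is split injective by hypothesis.

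For $a$ I would pass through the larger pulled-back family $\phi^*\VCyc$. Since $\Fin\subseteq\phi^*\Fin\subseteq\phi^*\VCyc$, functoriality of $H_n^G(-)$ gives $a'=p\circ a$, where $a'\co H_n^G(E_{\Fin}G;\bbK_\mcA)\to H_n^G(E_{\phi^*\VCyc}G;\bbK_\mcA)$ is the relative assembly map and $p$ is induced by $E_{\phi^*\Fin}G\to E_{\phi^*\VCyc}G$; hence a retraction of $a'$ yields a retraction of $a$, and it is enough to split $a'$. Now $a'$ factors, with coefficients $\bbK_\mcA$ throughout, as
\[
H_n^G(E_{\Fin}G)\xrightarrow{\;f\;}H_n^G(E_{\VCyc}G)\xrightarrow{\;\iota\;}H_n^G(E_{\phi^*\VCyc}G).
\]
The map $f$ is the relative assembly map from finite to virtually cyclic subgroups, which is split injective for every group and every additive category by Bartels \cite{MR2012963}. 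The map $\iota$ is an isomorphism by the transitivity principle: every $H\in\phi^*\VCyc$ is a subgroup of $\phi^{-1}(\phi(H))$ with $\phi(H)$ virtually cyclic, so $\phi^{-1}(\phi(H))$ satisfies the Farrell--Jones conjecture by assumption and therefore $H$ does as well, since the conjecture passes to subgroups; thus the $\VCyc$-assembly map of each such $H$ is an isomorphism. Consequently $a'=\iota\circ f$ is split injective, and hence so is $a$.

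Combining the two steps, the $G$-assembly map $b\circ a$ is a composite of split injective maps and is therefore split injective. The step I expect to be the main obstacle is the induction isomorphism: one must construct the induced coefficient category $\mcB$ over $Q$ and check that the resulting identification is natural in $Y$ and compatible with the maps induced by $\underbar EQ\to pt$, so that $b$ is genuinely the $Q$-assembly map for $\mcB$. The remaining ingredients---the factorization through pulled-back families, Bartels' splitting of the finite-to-virtually-cyclic relative assembly map, and the transitivity principle applied to the subgroups in $\phi^*\VCyc$---are formal once the induction isomorphism and the standard inheritance properties of the Farrell--Jones conjecture (subgroups, and $E_{\phi^*\mcF}G\simeq_G\mathrm{res}_\phi E_\mcF Q$) are in place.
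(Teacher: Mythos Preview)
Your proof is correct and follows essentially the same route as the paper: factor the assembly map through $E_{\phi^*\Fin}G$, handle the right half via the Bartels--Reich induction isomorphism (the paper simply cites \cite[Corollary~4.3]{coefficients} for this), and handle the left half by passing through $E_{\phi^*\VCyc}G$, where Bartels' splitting of $\Fin\to\VCyc$ and the transitivity principle (the paper's citation is \cite[Lemma~2.2]{bartelsluecktrees}) combine exactly as you describe. Your remark that each $H\in\phi^*\VCyc$ inherits the Farrell--Jones conjecture from $\phi^{-1}(\phi(H))$ via subgroup inheritance is a detail the paper leaves implicit, but otherwise the arguments coincide.
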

\begin{proof}
Let $\mcA$ be an additive $G$-category. That $\phi^{-1}(V)$ satisfies the Farrell--Jones conjecture for every virtually cyclic subgroup $V\leq Q$ implies that the natural map $H^G_n(E_{\VCyc}G;\bbK_\mcA)\rightarrow H^G_n(E_{\phi^*\VCyc}G;\bbK_\mcA)$ is an isomorphism by Bartels and L\"uck \mbox{\cite[Lemma 2.2]{bartelsluecktrees}}, where $\phi^*\VCyc:=\{K\leq G\mid \phi(K)\in \VCyc\}$. Here we used that the projection $E_{\VCyc}G\times E_{\phi^*\VCyc}G\to E_{\phi^*\VCyc}G$ is a model for the natural map $E_{\VCyc}G\to E_{\phi^*\VCyc}G$. Furthermore, the natural map $H^G_n(\underbar EG;\bbK_\mcA)\to H^G_n(E_{\VCyc}G;\bbK_\mcA)$ is split injective by Bartels \cite{MR2012963}. Now the commutative diagram
\[\xymatrix{H^G_n(\underbar EG;\bbK_\mcA)\ar[r]\ar@{^(->}[d]&H^G_n(E_{\phi^*\Fin}G;\bbK_\mcA)\ar[d]\\H^G_n(E_{\VCyc}G;\bbK_\mcA)\ar[r]^\cong&H^G_n(E_{\phi^*\VCyc}G;\bbK_\mcA)}\]
implies that the map $H^G_n(\underbar EG;\bbK_\mcA)\to H^G_n(E_{\phi^*\Fin}G;\bbK_\mcA)$ is split injective, where $\phi^*\Fin:=\{K\leq G\mid \phi(K)\in\Fin\}$.
By Bartels and Reich \cite[Corollary 4.3]{coefficients} the split injectivity for $Q$ implies that the assembly map $H^G_n(E_{\phi^*\Fin}G;\bbK_\mcA)\to K_n(\mcA[G])$ is split injective. Combining these results yields the proposition.
\end{proof}
To apply the above proposition for the short exact sequence from the previous section we need the following.
\begin{lemma}
\label{lem:solv}
The class of virtually solvable groups is closed under group extensions.
\end{lemma}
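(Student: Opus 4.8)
The plan is to prove that virtually solvable groups are closed under extensions by reducing to the corresponding statement for solvable groups and then handling the "virtually" part by an index argument. Recall that a group $S$ is solvable if its derived series $S^{(0)}=S$, $S^{(n+1)}=[S^{(n)},S^{(n)}]$ terminates at the trivial group after finitely many steps, and a group is virtually solvable if it has a solvable subgroup of finite index.

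First I would record the elementary fact that solvable groups are closed under extensions: if $1\to A\to B\to C\to 1$ is exact with $A$ and $C$ solvable, then $B$ is solvable. This is standard and follows because the derived series of $B$ eventually lands inside the image of $A$ (since $B/A\cong C$ is solvable, some term $B^{(k)}$ maps trivially to $C$, hence $B^{(k)}\leq A$), and thereafter the derived series of $B$ is dominated by that of $A$, which terminates. So $B^{(k+\ell)}=1$ where $\ell$ is the derived length of $A$.

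Now for the virtually solvable case, suppose $1\to N\to G\xrightarrow{\pi} Q\to 1$ is exact with $N$ and $Q$ virtually solvable. Choose solvable finite-index subgroups $N_0\leq N$ and $Q_0\leq Q$. The subgroup $\pi^{-1}(Q_0)\leq G$ has finite index in $G$ (equal to $[Q:Q_0]$), so it suffices to show it is virtually solvable, and hence I may assume $Q$ itself is solvable. The main subtlety is that $N_0$ need not be normal in $G$, so I cannot directly form the quotient $G/N_0$; this is the step I expect to require the most care. To fix this I would pass to the normal core $\tilde N:=\bigcap_{g\in G} gN_0g^{-1}$, the intersection of all $G$-conjugates of $N_0$. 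Since $N_0$ has finite index in $N$ and $N$ is normal in $G$, each conjugate $gN_0g^{-1}$ lies in $N$ and has the same finite index in $N$; there are only finitely many such conjugates (as $N_0$ has finitely many subgroups of its given index, or via the action on cosets), so $\tilde N$ is a finite-index normal subgroup of $G$ contained in $N_0$, and $\tilde N$ is solvable as a subgroup of the solvable group $N_0$.

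Finally I would assemble the pieces. Set $G_0:=\pi^{-1}(Q_0)\cap$ (a suitable finite-index piece) — concretely, consider $G_1:=\tilde N\cdot(\text{something mapping into }Q_0)$; cleaner is to argue as follows. Replacing $G$ by the finite-index subgroup $\pi^{-1}(Q_0)$ we may assume $Q$ is solvable. Then $\tilde N\unlhd G$ is solvable of finite index in $N$, and the quotient $G/\tilde N$ fits into $1\to N/\tilde N\to G/\tilde N\to Q\to 1$ with $N/\tilde N$ finite and $Q$ solvable, hence $G/\tilde N$ is virtually solvable; choose a finite-index solvable subgroup $\bar S\leq G/\tilde N$ and let $S\leq G/\tilde N$ be its preimage's intersection so that its preimage $S'\leq G$ under $G\to G/\tilde N$ has finite index and sits in $1\to\tilde N\to S'\to\bar S\to 1$ with both ends solvable. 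By the extension-closure of solvable groups, $S'$ is solvable, and $[G:S']<\infty$, so $G$ is virtually solvable. Since the original $G$ contained $\pi^{-1}(Q_0)$ with finite index and we have shown $\pi^{-1}(Q_0)$ is virtually solvable, $G$ itself is virtually solvable, completing the proof.
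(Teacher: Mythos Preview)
Your argument has a genuine gap at the normal-core step. You claim that $\tilde N=\bigcap_{g\in G}gN_0g^{-1}$ has finite index in $N$, arguing that there are only finitely many $G$-conjugates of $N_0$. Both offered justifications fail: a group need not have only finitely many subgroups of a given finite index, and the standard action-on-cosets argument requires $N_0$ to have finite index in $G$, not merely in $N$. A concrete counterexample: take $N=\bigoplus_{i\in\bbZ}\bbZ/2$, let $Q=\bbZ$ act on $N$ by the shift, set $G=N\rtimes\bbZ$, and put $N_0=\{x\in N:x_0=0\}$. Then $t^nN_0t^{-n}=\{x:x_n=0\}$, there are infinitely many distinct conjugates, and $\tilde N=\bigcap_{n}t^nN_0t^{-n}=0$ has infinite index in $N$. (Here $G$ is metabelian, so the lemma itself holds; only your argument breaks.)

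There is also a secondary gap: you assert without proof that a finite-by-solvable group is virtually solvable. This is true but not automatic, since finite groups need not be solvable. The paper in fact treats exactly this special case first, via the centralizer: if $N$ is finite then $C_G(N)=\ker(G\to\Aut(N))$ has finite index in $G$, and the sequence $1\to C_G(N)\cap N\to C_G(N)\to p(C_G(N))\to 1$ has abelian kernel and solvable image, so $C_G(N)$ is solvable. For the general case the paper avoids the normal core altogether and instead chooses a \emph{maximal} element $K$ among the normal, solvable, finite-index subgroups of $N$; such a maximal element exists because the product of two normal solvable subgroups is again normal and solvable, so indices can only decrease along inclusions in this directed set. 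Maximality forces $gKg^{-1}K=K$ for every $g\in G$, hence $K\trianglelefteq G$, and one reduces to the finite-kernel case via $1\to N/K\to G/K\to Q\to 1$.
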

The idea of the proof is taken from math.stackexchange.com, see \cite{437063}.
\begin{proof}
Let
\[1\to N\to G\xrightarrow{p} Q\to 1\]
be a short exact sequence and let $N,Q$ be virtually solvable. Let $Q'\leq Q$ be a solvable subgroup with $[Q:Q']<\infty$, then $[G:p^{-1}(Q')]<\infty$. Thus we can assume that $Q$ is solvable. We will first consider the case that $N$ is finite. Since $N$ is normal in $G$, $G$ acts on $N$ by conjugation, which induces a map $c\co G\to\Aut(N)$. The centralizer $C_G(N)$ of $N$ in the $G$ is the kernel of $c$. Since the class of solvable groups is closed under extension and $C_G(N)\cap N$ is abelian the exact sequence
\[1\to C_G(N)\cap N\to C_G(N)\to p(C_G(N))\to 1\]
shows that $C_G(N)$ is solvable.  The group $N$ is finite, thus $C_G(N)$ has finite index in $G$.

Now let $N$ be any virtually solvable group. And let $\mcS$ be the set of all normal, solvable, finite-index subgroups of $N$ ordered by inclusion. This is not empty and we can choose $K$ to be a maximal element of $S$. For every $g\in G$ also $gKg^{-1}K$ is a solvable, normal, finite-index subgroup of $N$. Since $K$ was maximal it therefore has to be normal in $G$. From the short exact sequence
\[1\to N/K\to G/K\to Q\to 1\]
it follows from the first case that $G/K$ is virtually solvable. Since $K$ is solvable the sequence
\[1\to K\to G\to G/K\to 1\]
implies that $G$ is virtually solvable.
\end{proof}
\section{Proof of \texorpdfstring{\autoref{thm:main}}{Theorem 1.1}}
\label{sec:proof}
For this section let $H$ be a virtually connected Lie group and $G\leq H$ a finitely generated subgroup such that there exists a finite dimensional model for $\underbar EG$. The proof of \autoref{thm:main} follows easily from the statements of the previous section.

\begin{proof}[Proof of \autoref{thm:main}]
Let $\Gamma:=Ad(G)$ be the image of $G$ under $Ad\co H\to GL(\mfh)$. Since $C_H(H_0)\cap G\cap H_0$ is contained in the center of $G$, the pre-image of any unipotent subgroup $U$ of $Ad(G\cap H_0)$ is a nilpotent subgroup of $G\cap H_0$. By \autoref{cor:nilab} and \autoref{prop:bound} there is a bound on the Hirsch-length of the nilpotent subgroups of $G\cap H_0$ and in particular there is a bound on the Hirsch-length of $U$. Since $G\cap H_0$ has finite index in $G$ this implies that there also is a bound on the Hirsch-length of the unipotent subgroups of $\Gamma$. Now we can apply
\begin{clear}[{\cite[\textbf{Corollary 3}]{KasFDC}}]
Let $F$ be a field of characteristic zero, $\Gamma$ a finitely generated subgroup of $GL_n(F)$ with a global upper bound on the Hirsch rank of its unipotent subgroups. Then the $K$-theoretic assembly map \[H_*^\Gamma(\underbar EG;\bbK_\mcA)\to H^\Gamma_*(pt;\bbK_\mcA)\cong K_*(\mcA[\Gamma])\] is split injective for every additive $\Gamma$-category $\mcA$.
\end{clear}
Note that \cite[Corollary 3]{KasFDC} if stated only for rings instead of additive $\Gamma$-categories, but by \cite[Theorem 8.1]{KasFDC} it is true for any additive $\Gamma$-category.

Furthermore, by Wegner \cite{solvable} every virtually solvable group satisfies the Farrell--Jones conjecture. Using this and \autoref{lem:solv} we see that the sequence
\[1\to C_H(H_0)\cap G\to G\to Ad(G)\to 1\]
satisfies the conditions of \autoref{prop:inher}.  Therefore, the assembly map
\[H^G_*(\underbar EG;\bbK_\mcA)\to K_*(\mcA[G])\]
 is split injective for every additive $G$-category $\mcA$.
\end{proof}
\section{L-theory}
\label{sec:l-theory}
Most of the statements from the previous sections also hold for $L$-theory. For the rest of the section let $G$ be a finitely generated subgroup of a virtually-connected Lie group $H$ with a finite dimensional model for $\underbar EG$ and let $Q$ be the image of $G$ under $Ad\co H\to GL(\mfh)$. Furthermore, let $\phi$ denote $Ad|_{G}$ and let $\mcA$ be an additive $G$-category with involution. As above we obtain the commutative diagram
\[\xymatrix{H^G_n(\underbar EG;\bbL^{ \langle-\infty\rangle}_\mcA)\ar[r]\ar[d]&H^G_n(E_{\phi^*\Fin}G;\bbL^{ \langle-\infty\rangle}_\mcA)\ar[d]\\H^G_n(E_{\VCyc}G;\bbL^{ \langle-\infty\rangle}_\mcA)\ar[r]^\cong&H^G_n(E_{\phi^*\VCyc}G;\bbL^{ \langle-\infty\rangle}_\mcA)}\]
and the lower horizontal map is still an isomorphism by Bartels and L\"uck \cite[Lemma~2.2]{bartelsluecktrees} and Wegner \cite{solvable}. But for the vertical map on the left to be injective we need that for every virtually cyclic subgroup $V\subseteq G$ there is an $i_0\in\bbN$ such that for every $i\geq i_0$ we have $K_{-i}(\mcA[V])=0$, see Bartels \cite{MR2012963}.
Then it remains to show that
\[H^G_n(E_{\phi^*\Fin}G;\bbL^{ \langle-\infty\rangle}_\mcA)\to L^{ \langle-\infty\rangle}_n(\mcA[G])\]
is split injective. By Bartels and Reich \cite[Proposition 4.2 and Corollary 4.3]{coefficients} this follows if
\[H^Q_n(\underbar EG;\bbL^{ \langle-\infty\rangle}_{\ind_\phi\mcA})\to L^{ \langle-\infty\rangle}_n((\ind_\phi\mcA)[Q])\]
is split injective. See \cite{coefficients} for the definition of $\ind_\phi\mcA$. To apply \cite[Theorem 9.1]{KasFDC} as above, we need the further assumption that for every finite subgroup $H\leq Q$ there is an $i_0\in\bbN$ such that for every $i\geq i_0$ we have
\[0=K_{-i}((\ind_\phi\mcA)[H])\cong K_{-i}(\mcA[\phi^{-1}(H)]).\]
Since $\phi^{-1}(H)$ is virtually abelian we obtain the following version of the main theorem for $L$-theory.
\begin{thm}
\label{thm:mainL}
Let $G$ be a finitely generated subgroup of a virtually connected Lie group and assume there exists an $N\in\bbN$ such that every finitely generated abelian subgroup of $G$ has at most rank $N$. Let $\mcA$ be an additive $G$-category with involution. Assume further that for every virtually abelian subgroup $H$ of $G$ there is an $i_0\in \bbN$ such that for every $i\geq i_0$ we have $K_{-i}(\mcA[H])=0$, then the $L$-theoretic assembly map
\[H_n^G(\underbar EG;\bbL^{ \langle-\infty\rangle}_\mcA)\to L^{ \langle-\infty\rangle}_n(\mcA([G])\]
is split injective.
\end{thm}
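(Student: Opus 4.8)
The plan is to transport the proof of \autoref{thm:main} into $L$-theory along the short exact sequence $1\to J\to G\xrightarrow{\phi}Q\to 1$, where $\phi=Ad|_G$, $Q=\phi(G)$ and $J=C_H(H_0)\cap G$ is virtually abelian, checking at each step that the additional vanishing of negative $K$-groups demanded by the $L$-theoretic forms of the input theorems is supplied by the single hypothesis on virtually abelian subgroups. The commutative square displayed before the statement organizes the argument, so I would first justify its bottom map. For any virtually cyclic $V\leq Q$ the preimage $\phi^{-1}(V)$ has virtually solvable kernel $J$ and virtually solvable quotient $V$, hence is itself virtually solvable by \autoref{lem:solv}, and therefore satisfies the Farrell--Jones conjecture by Wegner \cite{solvable}; Bartels--L\"uck \cite[Lemma~2.2]{bartelsluecktrees} then shows that $H^G_n(E_{\VCyc}G;\bbL^{\langle-\infty\rangle}_\mcA)\to H^G_n(E_{\phi^*\VCyc}G;\bbL^{\langle-\infty\rangle}_\mcA)$ is an isomorphism.

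The hypothesis first enters through the left vertical map. By Bartels \cite{MR2012963} the map $H^G_n(\underbar EG;\bbL^{\langle-\infty\rangle}_\mcA)\to H^G_n(E_{\VCyc}G;\bbL^{\langle-\infty\rangle}_\mcA)$ is split injective provided that for every virtually cyclic $V\leq G$ one has $K_{-i}(\mcA[V])=0$ for all sufficiently large $i$. Since a virtually cyclic group is in particular virtually abelian, this is exactly the assumption of the theorem applied to $V$. A diagram chase through the square, as in \autoref{prop:inher}, then yields split injectivity of $H^G_n(\underbar EG;\bbL^{\langle-\infty\rangle}_\mcA)\to H^G_n(E_{\phi^*\Fin}G;\bbL^{\langle-\infty\rangle}_\mcA)$, reducing the theorem to split injectivity of the assembly map $H^G_n(E_{\phi^*\Fin}G;\bbL^{\langle-\infty\rangle}_\mcA)\to L^{\langle-\infty\rangle}_n(\mcA[G])$.

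Next I would apply Bartels--Reich \cite[Proposition~4.2 and Corollary~4.3]{coefficients} to descend to the quotient, so that it suffices to prove split injectivity of $H^Q_n(\underbar EQ;\bbL^{\langle-\infty\rangle}_{\ind_\phi\mcA})\to L^{\langle-\infty\rangle}_n((\ind_\phi\mcA)[Q])$, and invoke \cite[Theorem~9.1]{KasFDC}. As in the proof of \autoref{thm:main}, $Q$ is a finitely generated subgroup of $GL(\mfh)$ with a global bound on the Hirsch length of its unipotent subgroups, so the only remaining input of that theorem is the vanishing $K_{-i}((\ind_\phi\mcA)[H])=0$ for large $i$ and every finite $H\leq Q$. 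Using the identification $K_{-i}((\ind_\phi\mcA)[H])\cong K_{-i}(\mcA[\phi^{-1}(H)])$, this is precisely the hypothesis applied to $\phi^{-1}(H)$, which is virtually abelian because it sits in an extension $1\to J\to\phi^{-1}(H)\to H\to 1$ with $J$ virtually abelian and $H$ finite, and thus has a finite-index abelian subgroup.

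The argument is therefore almost entirely a matter of assembling pieces already in place, and I expect no serious analytic difficulty. The one point demanding genuine care is recognizing that \emph{virtually abelian} is exactly the common hypothesis governing the two families that enter the proof: the virtually cyclic subgroups of $G$ needed for the left vertical map, and the preimages $\phi^{-1}(H)$ of finite subgroups of $Q$ needed to feed \cite[Theorem~9.1]{KasFDC}. Verifying the second of these, via the induction isomorphism of \cite{coefficients} and the closure of virtual abelianness under extension by a finite group, is the step I would check most carefully.
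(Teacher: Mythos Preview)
Your proposal is correct and follows essentially the same route as the paper: the paper's proof is precisely the discussion preceding the statement of \autoref{thm:mainL}, which sets up the $L$-theoretic square, uses Bartels--L\"uck and Wegner for the bottom isomorphism, invokes Bartels' splitting result for the left vertical map under the $K_{-i}$-vanishing hypothesis on virtually cyclic subgroups, descends to $Q$ via Bartels--Reich, and then applies \cite[Theorem~9.1]{KasFDC} after identifying $K_{-i}((\ind_\phi\mcA)[H])\cong K_{-i}(\mcA[\phi^{-1}(H)])$ for finite $H\leq Q$. You have identified exactly the two places where the virtually-abelian vanishing hypothesis is consumed, which is the only point the paper singles out as new relative to the $K$-theoretic argument.
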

For torsion-free groups $G$ the \emph{integral Novikov conjecture} states that the assembly map
\[H_n^G(EG;\bbL^{\langle-\infty\rangle}_\bbZ)\to L_n^{\langle-\infty\rangle}(\bbZ[G])\]
is injective. It is known that the integral Novikov conjecture is false for groups containing torsion. Following Ji \cite{MR2379803} we say that $G$ satisfies the \emph{generalized integral Novikov conjecture} if the assembly maps
\[H_n^G(\underbar EG;\bbL^{\langle-\infty\rangle}_\bbZ)\to L_n^{\langle-\infty\rangle}(\bbZ[G]),\quad H_n^G(\underbar EG;\bbK_\bbZ)\to K_n(\bbZ[G])\]
are injective. By L\"uck and Reich \cite[Propostion 2.20]{MR2181833} the relative rational assembly map
\[H_n^G(EG;\bbL^{\langle-\infty\rangle}_\bbZ)\otimes_\bbZ\bbQ\to H_n^G(\underbar EG;\bbL^{\langle-\infty\rangle}_\bbZ)\otimes_\bbZ\bbQ\]
is injective. Observe that, since the $\bbZ/2$-Tate cohomology groups vanish rationally, there is no difference between the various decorations in L-theory because of the Rothenberg sequence. Therefore, by \cite[Proposition 1.53]{MR2181833} the injectivity of the rational assembly map
\[H_n^G(EG;\bbL^{\langle-\infty\rangle}_\bbZ)\otimes_\bbZ\bbQ\to L_n^{\langle-\infty\rangle}(\bbZ[G])\otimes_\bbZ\bbQ\]
implies the Novikov conjecture about the homotopy invariance of higher signatures. In particular, the generalized integral Novikov conjecture implies the (classical) Novikov conjecture.

We will show that $K_{-n}(\bbZ[G])=0$ for $n>1$ and any virtually abelian group $A$. Therefore, \autoref{thm:mainL} implies the generalized integral Novikov conjecture for the groups $G$ appearing in the theorem, i.e. we get the following corollary.
\begin{cor}
Let $G$ be a finitely generated subgroup of a virtually connected Lie group and assume there exists an $N\in\bbN$ such that every finitely generated abelian subgroup of $G$ has at most rank $N$. Then $G$ satisfies the generalized integral Novikov conjecture.
\end{cor}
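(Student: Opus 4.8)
The plan is to obtain both injectivity statements defining the generalized integral Novikov conjecture from the results already established, by specializing to $\mcA=\bbZ$ with its standard involution. The $K$-theoretic half is immediate: by \autoref{prop:bound} the hypothesis that every finitely generated abelian subgroup of $G$ has rank at most $N$ is equivalent to the existence of a finite-dimensional model for $\underbar EG$, so \autoref{thm:main} applies to (the additive $G$-category associated with) the ring $\bbZ$ and shows that $H_n^G(\underbar EG;\bbK_\bbZ)\to K_n(\bbZ[G])$ is split injective, hence injective. For the $L$-theoretic half I would invoke \autoref{thm:mainL} with $\mcA=\bbZ$. The rank hypothesis is given, so the only remaining point is the extra assumption of that theorem: for every virtually abelian subgroup $H\leq G$ there must be an $i_0$ with $K_{-i}(\bbZ[H])=0$ for all $i\geq i_0$. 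The whole corollary therefore reduces to proving that $K_{-n}(\bbZ[A])=0$ for all $n\geq 2$ and every virtually abelian group $A$, after which $i_0=2$ works uniformly and both assembly maps are injective.

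To prove this vanishing I would first reduce to the finitely generated case. Since non-connective $K$-theory commutes with filtered colimits of rings and $\bbZ[A]=\colim_i\bbZ[A_i]$ with $A_i$ ranging over the finitely generated subgroups of $A$, one gets $K_{-n}(\bbZ[A])=\colim_i K_{-n}(\bbZ[A_i])$, and each $A_i$ is again finitely generated virtually abelian because a finitely generated subgroup of a virtually abelian group is virtually abelian. So assume $A$ finitely generated virtually abelian. Then $A$ is virtually solvable and hence satisfies the Farrell--Jones conjecture by Wegner \cite{solvable}, so the assembly map identifies $K_{-n}(\bbZ[A])$ with $H^A_{-n}(E_{\VCyc}A;\bbK_\bbZ)$. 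I would then run the equivariant Atiyah--Hirzebruch spectral sequence for this $A$-homology theory, whose $E^2$-page is the Bredon homology of $E_{\VCyc}A$ with coefficients in the system $A/V\mapsto K_q(\bbZ[V])$ for $V\in\VCyc$ and is concentrated in filtration degrees $p\geq 0$. Any term contributing to total degree $-n$ with $n\geq 2$ has $q=-n-p\leq -2$, so it would suffice to know that $K_q(\bbZ[V])=0$ for all $q\leq -2$ and every virtually cyclic $V$; this makes every relevant $E^2$-term vanish and forces $H^A_{-n}(E_{\VCyc}A;\bbK_\bbZ)=0$ for $n\geq 2$.

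The main obstacle is thus the base case, the vanishing $K_{-n}(\bbZ[V])=0$ for $n\geq 2$ and $V$ virtually cyclic. For finite $V$ this is Carter's theorem on the negative $K$-theory of integral group rings. An infinite virtually cyclic group is either a semidirect product $F\rtimes\bbZ$ with $F$ finite or an amalgam of two finite groups along a common index-two subgroup; in both cases the lower $K$-groups were determined by Farrell and Jones, and their computations -- via the twisted Bass--Heller--Swan decomposition, respectively a Mayer--Vietoris sequence, together with the finite-group case -- give $K_{-n}(\bbZ[V])=0$ for $n\geq 2$. Granting this input, the degree count above is purely formal, and combining it with the $K$-theoretic statement yields the injectivity of both assembly maps, which is exactly the generalized integral Novikov conjecture for $G$.
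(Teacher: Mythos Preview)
Your proposal is correct and follows essentially the same route as the paper: both reduce to showing $K_{-n}(\bbZ[A])=0$ for $n\geq 2$ and $A$ virtually abelian, invoke the Farrell--Jones isomorphism for $A$, and then kill the equivariant homology using the Farrell--Jones vanishing $K_{-n}(\bbZ[V])=0$ for $V$ virtually cyclic. The only cosmetic differences are that the paper takes the colimit over finite subcomplexes of $E_{\VCyc}A$ and argues by cellular induction, whereas you first pass to finitely generated subgroups of $A$ and then run the Atiyah--Hirzebruch spectral sequence; these are equivalent packagings of the same degree count.
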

By Farrell and Jones \cite[Theorem 2.1]{MR1340838} for every virtually cyclic group $V$ and $n>1$
\[K_{-n}(\bbZ[V])=0.\]
Let $G$ be a group and let $X$ be a finite $G$-CW-complex with virtually cyclic stabilizers. By induction on the dimension of $X$ we prove that
\[H_{-n}^G(X;\bbK_\bbZ)=0\]
for every $n>1$. For $\dim X=0$ we have
\[H_{-n}^G(X;\bbK_\bbZ)\cong \bigoplus_{x\in X} \bbK_{-n}(\bbZ[G_x])=0,\]
where the stabilizers $G_x$ are virtually cyclic by assumption.
Assume the above holds for $m$ and let $\dim X=m+1$. Then we have the long exact sequence
\[0=H_{-n}^G(X^{(m)};\bbK_\bbZ)\to H_{-n}^G(X;\bbK_\bbZ)\to H^G_{-n}(X,X^{(m)};\bbK_\bbZ)\]
and
\[H^G_{-n}(X,X^{(m)};\bbK_\bbZ)\cong \bigoplus_{c\in C_m} \bbK_{-n-m-1}(\bbZ[G_c])=0,\]
where $C_m$ denotes the set of $m$-cells of $X$ and $G_c$ the (virtually-cyclic) stabilizer of the cell $c$. 
Since every virtually abelian group $A$ satisfies the Farrell--Jones conjecture we have
\[K_{-n}(\bbZ[A])\cong H_{-n}^A(X;\bbK_\bbZ)\cong \colim_{K}H_{-n}^A(AK;\bbK_\bbZ)=0,\]
where $X$ is a $A$-CW-complex model for $E_{\VCyc}A$ and the colimit is taken over all finite subcomplexes $K\subseteq X$.
\section{Inheritance under colimits}
\label{sec:inheritance}
In this section we want to show that \autoref{thm:main} and \autoref{thm:mainL} hold without the assumption that $G$ is finitely generated. 

By Bartels, Echterhoff and L\"uck \cite[Lemma 2.4, Lemma 6.2]{inheritance} for every system $G_\alpha$ of finitely generated subgroups of $G$ such that $\colim_\alpha G_\alpha\cong G$ the assembly map
\[H^G_n(\underbar EG;\bbK_\mcA)\to K_n(\mcA[G])\]
is the colimit of the assembly maps
\[H^{G_\alpha}_n(\underbar EG_\alpha;\bbK_\mcA)\to K_n(\mcA[G_\alpha]),\]
for any additive $G$-category $\mcA$. The same statement holds in $L$-theory for any additive $G$-category with involution.  Note that the statement in \cite{inheritance} is formulated for rings with $G$-action instead of additive $G$-categories, but the statement for $G$-categories holds in the same way. Furthermore, a finite-dimensional model for $\underbar EG$ gives a finite-dimensional model for $\underbar EG_\alpha$ by restricting the action to $G_\alpha$. So taking the colimit over all finitely generated subgroups proves that injectivity holds without the assumption that $G$ is finitely generated. For the construction of a splitting we need to see that the splittings for the finitely generated subgroups are natural with respect to the structure maps of the colimit. In the proof of \autoref{thm:main} and \autoref{thm:mainL} the assumption that $G$ is finitely generated is only needed to apply \cite[Corollary 3]{KasFDC} respectively its L-theoretic analog. So it suffices to see that the splittings constructed in \cite{KasFDC} are natural with respect to the structure maps of the colimit. 

We will use the definitions of controlled categories and bounded mapping spaces from \cite[Section 5 and 7]{KasFDC}. In the following let $X$ denote a finite dimensional simplicial model for $\underbar EG$. By Bartels, Farrell, Jones and Reich \cite[Section 6]{MR2030590} the assembly map \[H^G_n(\underbar EG;\bbK_\mcA)\to K_n(\mcA[G])\] can be identified with the map \[\colim_{K\subseteq X~fin.}\pi_{n+1}(\bbK\mcA_G(GK)^\infty)^G\to\colim_{K\subseteq X~fin.}\pi_{n}(\bbK\mcA_G(GK)_0)^G.\]
Now consider the diagram
\[\xymatrix{
\colim\limits_{K\subseteq X~fin.}\pi_{n+1}(\bbK\mcA_G(GK)^\infty)^G\ar[d]^f\ar[r]& \colim\limits_{K\subseteq X~fin.}\pi_{n}(\bbK\mcA_G(GK)_0)^G\ar[d]^j\\
\colim\limits_{K\subseteq X~fin.}\pi_{n+1}\Map^{bd}_G(X,\bbK\mcA_G(GK)^\infty)\ar[r]_h& \colim\limits_{K\subseteq X~fin.}\pi_{n}\Map^{bd}_G(X,\bbK\mcA_G(GK)_0).}\]
By \cite[Remark 7.7]{KasFDC} the map $f$ is an isomorphism and the map $h$ is an isomorphism in the situation of \cite[Corollary 3]{KasFDC}. 

Let $\Gamma\to \Lambda$ be an injective group homomorphism. For every $\Lambda$-set $J$ and every subcomplex $K\subseteq X$ we can define a map
\[(\prod^{bd}_J\mcA_\Gamma(\Gamma K)^\infty)^\Gamma\to(\prod^{bd}_J\mcA_\Lambda(\Lambda K)^\infty)^\Lambda\]
as follows. A controlled modul $(M_j)\in(\prod^{bd}_J\mcA_\Gamma(\Gamma K))^\Gamma$ is send to $(M'_j)_j$ with $(M'_j)_{h',x,t}:=\bigoplus_{[h]\in \Lambda/\Gamma}(M_{h^{-1}j})_{h^{-1}h',h^{-1}x,t}$ and analogously on morphisms. This map is well defined since $(M_j)$ is $\Gamma$-invariant.
The above maps induce a map
\[\Map^{bd}_\Gamma(X,\bbK\mcA_\Gamma(\Gamma K))\to \Map^{bd}_\Lambda(X,\bbK\mcA_\Lambda(\Lambda K))\]
for every finite subcomplex $K\subseteq X$ and in the special case where $J=\{pt\}$ we obtain a map
\[(\bbK\mcA_\Gamma(\Gamma K)^\infty)^\Gamma\to(\bbK\mcA_\Lambda(\Lambda K)^\infty)^\Lambda.\]
The same maps can be constructed with $\mcA_\Gamma(\Gamma K)^\infty$ and $\mcA_\Lambda(\Lambda K)^\infty$ replaced by $\mcA_\Gamma(\Gamma K)_0$ and $\mcA_\Lambda(\Lambda K)_0$ respectively. So they induce maps from the above diagram for $\Gamma$ to the same diagram for $\Lambda$. 
We will omit the technical proofs that the maps of the diagram are natural with respect to these maps and that under the identification with the assembly map they correspond to the structure maps of the colimit from \cite{MR2030590}. This shows that the splitting $f^{-1}\circ h^{-1}\circ j$ is natural with respect to the structure maps of the colimit.

Now let us consider the $L$-theoretic version. For \cite[Remark 7.7]{KasFDC} it was used that the category $(\prod_{j\in J}\bbK\mcA_G(GK)^\infty)^G\simeq \prod_{[j]\in G\backslash J}\bbK\mcA_G^{G_j}(GK)^\infty$ is weakly equivalent to $(\bbK\prod_{j\in J}\mcA_G(GK)^\infty)^G\simeq \bbK\prod_{[j]\in G\backslash J}\mcA_G^{G_j}(GK)^\infty$ for every $G$-set $J$ with finite stabilizers and every finite subcomplex $K\subseteq X$, where $G_j$ is the stabilizer of $j\in J$.
Let $H\leq G$ be finite, then \[K_{n}(\mcA_G^{G_j}(G/H)^\infty)\cong \prod_{G_j\backslash G/H}K_{n}(\mcA_G^{G_j}(G_j/(G_j\cap H))^\infty)\cong \prod_{G_j\backslash G/H}K_{n-1}(\mcA[G_j\cap H]).\] If for each finite subgroup $H\leq G$ there exists $N\in \bbN$ such that for each $n\geq N$ the groups $K_{-n}\mcA[H]$ vanish, then by induction on the cells this implies that for every finite subcomplex $K\subseteq X$ there exists $N\in\bbN$ such that for $n\geq N$ the groups $K_{-n}(\mcA_G^{G_j}(GK)^\infty)$ vanish. Therefore, under this assumption $L$-theory commutes with the above product and we get that the map
\[\phi\co\Map^{bd}_G(X,\bbL\mcA_G(GK)^\infty)\to\Map_G(X,\bbL\mcA_G(GK)^\infty)\]
is an isomorphism and under the above assumption also
\[\psi\co(\bbL\mcA_G(GK)^\infty)^G\to\Map_G(X,\bbL\mcA_G(GK)^\infty)\]
is an isomorphism, see \cite[Section 9]{KasFDC}. Since $\psi$ factors over $\phi$ the map
\[(\bbL\mcA_G(GK)^\infty)^G\to\Map^{bd}_G(X,\bbL\mcA_G(GK)^\infty)\]
is an isomorphism as well. Therefore, we obtain the naturality of the splitting as in the case for $K$-theory.
\bibliographystyle{amsalpha}
\bibliography{fqFDC}
\end{document}